\newtheorem{remark}{Remark}
\newtheorem{example}{Example}
\newtheorem{proposition}{Proposition}
\newtheorem{lemma}{Lemma}
\newtheorem{theorem}{Theorem}
\newtheorem{definition}{Definition}
\DeclareMathOperator{\supp}{supp}
\DeclareMathOperator{\prob}{prob}
\title{Branching points in the planar Gilbert--Steiner problem have degree 3}
\author[1,2]{Danila Cherkashin}
\author[2,3]{Fedor Petrov}
\affil[1]{Institute of Mathematics and Informatics, Bulgarian Academy of Sciences, Sofia}
\affil[2]{Saint Petersburg State University}
\affil[3]{St. Petersburg Department of Steklov Mathematical Institute of Russian Academy of Sciences}
\begin{document}

\maketitle

\begin{abstract}
Gilbert--Steiner problem is a generalization of the Steiner tree problem on a specific optimal mass transportation.
    We show that every branching point in a solution of the planar Gilbert--Steiner problem has degree 3.
\end{abstract}

\section{Introduction}

One of the first models for branched transport was introduced by Gilbert~\cite{gilbert1967minimum}. The difference with the optimal transportation problem is that the extra geometric points may be of use; this explains the naming in honor of Steiner. Sometimes it is also referred to as \textit{optimal branched transport}; a large part of book~\cite{bernot2008optimal} is devoted to this problem. Let us proceed with the formal definition.

\begin{definition}
    Let $\mu^+,\mu^-$ be two finite measures on a metric space $(X,\rho(\cdot,\cdot))$ with finite supports such that total masses $\mu^+(X)=\mu^-(X)$ are equal. Let $V\subset X$  be a finite set containing the support of the signed measure $\mu^+-\mu^-$, the elements of $V$ are called \emph{vertices}. Further, let  $E$ be a finite collection of unordered pairs $\{x,y\}\subset V$ which we call \emph{edges}. So, $(V,E)$ is a  simple undirected finite graph. Assume that for every $\{x,y\}\in E$ two non-zero real numbers $m(x,y)$ and $m(y,x)$ are defined so that $m(x,y)+m(y,x)=0$. This data set is called a \emph{$(\mu^+,\mu^-)$-flow} if 
    \[
    \mu^+ - \mu^- = \sum_{\{x,y\}\in E} m(x,y)\cdot (\delta_y-\delta_x)
    \]
    where $\delta_x$ denotes a delta-measure at $x$ (note that the summand $m(x,y)\cdot (\delta_y-\delta_x)$ is well-defined in the sense that it does not depend on the order of $x$ and $y$).
\end{definition}

Let $C\colon [0,\infty)\to [0,\infty)$ be a \emph{cost function}. The expression
\[
\sum_{\{x,y\}\in E} C(|m(x,y)|) \cdot \rho(x,y)
\]
is called the \emph{Gilbert functional} of the $(\mu^+,\mu^-)$-flow.

The \textit{Gilbert--Steiner problem} is to find the flow which minimizes the Gilbert functional with cost function $C(x) = x^p$, for a fixed $p \in (0,1)$; we call a solution \emph{minimal flow}.

Vertices from $\supp(\mu^+) \setminus \supp(\mu^-)$ are called \textit{terminals}.
A vertex from $V \setminus \supp(\mu^+) \setminus \supp(\mu^-)$ is called a \textit{branching point}. Formally, we allow a branching point to have degree 2, but clearly it never happens in a minimal flow.

Local structure in the Gilbert--Steiner problem was discussed in~\cite{bernot2008optimal}, and the paper~\cite{lippmann2022theory} deals with planar case. A local picture around a branching point $b$ of degree 3 is clear due to the initial paper of Gilbert.
Similarly to the finding of the Fermat--Torricelli point in the celebrated Steiner problem one can determine the angles around $b$ in terms of masses (see Lemma~\ref{main_l}).

\begin{theorem}[Lippmann--Sanmart{\'\i}n--Hamprecht~\cite{lippmann2022theory}, 2022]
    \label{theorem:bezdari}
    A solution of the planar Gilbert--Steiner problem has no branching point of degree at least 5.
\end{theorem}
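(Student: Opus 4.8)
The plan is to derive a contradiction from first‑order optimality of a minimal flow under two kinds of local surgery at a hypothetical branching point $b$ of degree $k\ge 5$: rigidly translating $b$, and splitting $b$ into two nearby vertices. Denote by $u_1,\dots,u_k$ the unit vectors pointing from $b$ along its $k$ incident edges, listed in the cyclic (angular) order they occur in the plane, and let $m_i\neq 0$ be the mass flowing out of $b$ along the $i$‑th edge; since $b$ carries no mass of $\mu^+-\mu^-$, conservation gives $\sum_i m_i=0$. Translating $b$ by $\varepsilon v$ perturbs the Gilbert functional by $-\varepsilon\langle v,\sum_i|m_i|^p u_i\rangle+O(\varepsilon^2)$, so minimality forces the stationarity identity $\sum_i|m_i|^p u_i=0$. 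In particular, for $k\ge 3$ the angular gap between two cyclically consecutive $u_i$'s is always $<\pi$, so the vectors $v_i:=|m_i|^p u_i$, taken in order, are the edges of a closed convex $k$‑gon with side lengths $|m_i|^p$.

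Now fix a subset $A\subseteq\{1,\dots,k\}$ and split $b$: leave $b$ attached to the edges indexed by the complement $A^{c}$, attach the edges indexed by $A$ to a fresh vertex $b'$ at distance $\varepsilon$ from $b$, and, when $M_A:=\sum_{i\in A}m_i\neq 0$, join $b$ to $b'$ by a new edge carrying the mass $M_A$ that conservation at $b$ now demands (when $M_A=0$ no edge is added). This is again a $(\mu^+,\mu^-)$‑flow, and moving $b'$ in the best direction changes the functional by $\varepsilon\bigl(|M_A|^p-\bigl|\sum_{i\in A}|m_i|^pu_i\bigr|\bigr)+O(\varepsilon^2)$. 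Hence a minimal flow satisfies, for every $A$,
\[
\Bigl|\sum_{i\in A}|m_i|^p u_i\Bigr|\ \le\ \Bigl|\sum_{i\in A}m_i\Bigr|^{p}.
\]
Singletons give equalities, complements are consistent with stationarity, and for a cyclic block $A$ the left–hand side is the length of the corresponding chord of the convex polygon above; so these inequalities bound every chord of that polygon by a power of the net mass it carries.

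The final step is to show this whole package is infeasible once $k\ge 5$: a closed convex $k$‑gon, a sign pattern $\mathrm{sgn}\,m_i$ with $\sum_i m_i=0$ containing both signs (hence at least two sign changes around the cycle), the stationarity relation, and all the chord bounds cannot coexist. I would argue by cases on the number of sign changes and on how unbalanced the masses are. At a sign change between two edges of comparable mass, the two‑element inequality forces their directions to be nearly antipodal, which exhausts the available total angle $2\pi$ as soon as there are five or more edges; if instead the masses are strongly unbalanced, one groups the light edges into a single block $A$ and plays the chord bound $\bigl|\sum_{i\in A}|m_i|^p u_i\bigr|\le|M_A|^p$ — which is tiny — against stationarity to show the remaining, heavy edges cannot close up. The delicate point, and the one I expect to be the real obstacle, is making the angle accounting uniform over all $p\in(0,1)$: the crude pairwise angle lower bound degrades both as $p\to 1$ and for very unbalanced masses, so one must extract cancellation from larger blocks rather than from single pairs. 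This is presumably also why the borderline case of degree $4$ survives this method and requires the sharper analysis carried out in the present paper.
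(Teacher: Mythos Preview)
Your setup is correct and in fact coincides with what the paper uses: the stationarity identity $\sum_i |m_i|^p u_i=0$ is exactly the first variation at $b$, and your two-element splitting inequality $|v_i+v_{i+1}|\le |m_i+m_{i+1}|^p$ is equivalent to the angle bound $\angle V_iOV_{i+1}\ge h(m_i,m_{i+1})$ of Lemma~\ref{d1}. The convex polygon picture is also fine.

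The genuine gap is your ``final step''. You do not prove that the package (closed convex $k$-gon, sign pattern, stationarity, chord bounds) is infeasible for $k\ge 5$; you outline a case analysis on sign changes and mass imbalance and then explicitly flag the obstacle: the pairwise angle bound degrades as $p\to 1$ and for unbalanced masses, so a uniform argument is missing. That is precisely the content of the theorem, and it is not supplied. Note also that the paper does not give its own proof of Theorem~\ref{theorem:bezdari}; it is quoted from \cite{lippmann2022theory}, and the paper instead proves the stronger degree-$4$ statement by a different route.

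That route avoids your case analysis entirely, and you may find it instructive. The single missing inequality is
\[
\sum_{j=1}^{k} h(m_j,m_{j+1})\ \ge\ 2\pi,
\]
uniformly in $p$ and in the masses. The paper obtains it by an isometric embedding: with $C(t)=t^p$ admissible (Lemma~\ref{x_to_p}), set $A_j(x)=e^{i(m_1+\cdots+m_j)x}-1$ in the real Hilbert space $L^2(\lambda)$. Then $\|A_j-A_{j-1}\|=C(|m_j|)$ and $\|A_{j+1}-A_{j-1}\|=C(|m_j+m_{j+1}|)$, so the exterior angle of the closed polygonal line $A_1\ldots A_kA_1$ at $A_j$ equals $h(m_j,m_{j+1})$. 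Lemma~\ref{f1} then gives $\sum_j h(m_j,m_{j+1})\ge 2\pi$ with no case distinctions. Combined with $\sum_j \angle V_jOV_{j+1}=2\pi$ and Lemma~\ref{d1}, equality is forced everywhere; the equality case of Lemma~\ref{f1} together with affine independence (Proposition~\ref{independence}) then rules out $k\ge 4$, hence in particular $k\ge 5$. Your larger-block chord bounds are valid but are not needed once the embedding is in hand.
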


The goal of this paper is to give some conditions on a cost function under which all branching points in a planar solution have degree 3. They 
are slightly stronger than the Schoenberg \cite{schoenberg} conditions
of the embedding of the metric of the form 
$\rho(x,y):=f(x-y)$ to a Hilbert space. In particular, this covers the case of the standard cost function $x^p$, $0<p<1$. The following main theorem is the part of a more general Theorem~\ref{theorem:general}.

\begin{theorem}
 \label{theorem:main}
    A solution of the planar Gilbert--Steiner problem has no branching point of degree at least 4.
\end{theorem}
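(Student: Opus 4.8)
By Theorem~\ref{theorem:bezdari} it is enough to rule out a branching point of degree exactly $4$. Assume $b$ is such a point in a minimal flow, with edges to $P_1,\dots,P_4$ carrying non-zero signed masses $\mu_1,\dots,\mu_4$ (so $\sum_i\mu_i=0$), and let $u_i$ be the unit vector from $b$ to $P_i$. Minimality forbids decreasing the cost by moving $b$ alone, which yields the Gilbert balance condition
\[
\sum_{i=1}^4 C(|\mu_i|)\,u_i=0 .
\]

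\textbf{The competitor.} For each of the three ways $\pi$ to split $\{1,2,3,4\}$ into two pairs $\{i,j\},\{k,l\}$, replace $b$ by two nearby vertices $b_1,b_2$, route the two edges of the first pair to $b_1$ and those of the second to $b_2$, and add the edge $b_1b_2$ carrying the well-defined mass $s_\pi:=|\mu_i+\mu_j|=|\mu_k+\mu_l|$ (if $s_\pi=0$ this edge is dropped and each pair becomes a straight-through two-edge path). Writing $b_1=b+\varepsilon v$, $b_2=b+\varepsilon w$ and computing the first variation with the help of the balance condition, the cost changes by $\varepsilon\bigl(C(s_\pi)-|A_\pi|\bigr)+o(\varepsilon)$ for the optimal choice of $v-w$, where $A_\pi:=C(|\mu_i|)u_i+C(|\mu_j|)u_j$. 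So if $|A_\pi|>C(s_\pi)$ for some $\pi$ the flow is not minimal; if $s_\pi=0$ for some $\pi$ the flow can instead be strictly shortened by straightening (a few collinear subcases, in which the flow fails to be reduced or splits into pieces, are handled separately). Hence we may assume $|A_\pi|\le C(s_\pi)$ and $s_\pi>0$ for all three $\pi$.

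\textbf{The key inequality.} From $\sum_iC(|\mu_i|)u_i=0$ one gets the elementary identity $\sum_\pi|A_\pi|^2=\sum_{i=1}^4C(|\mu_i|)^2$ (a form of the parallelogram law / Euler's quadrilateral identity), so the assumption $|A_\pi|\le C(s_\pi)$ forces $\sum_{i}C(|\mu_i|)^2\le\sum_\pi C(s_\pi)^2$. It therefore suffices to prove the \emph{strict reverse} inequality $\sum_i C(|\mu_i|)^2>\sum_\pi C(s_\pi)^2$ for every admissible configuration $(\mu_i)$ — and this is precisely where the hypotheses on $C$ are used. Assuming $C(|\cdot|)^2$ is a negative-definite function of its argument, write its L\'evy--Khinchine form $C(|t|)^2=\gamma t^2+\int_{\mathbb R\setminus\{0\}}(1-\cos t\xi)\,d\nu(\xi)$ with $\gamma\ge 0$ and $\nu\ge0$. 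A direct computation from $\sum_i\mu_i=0$ gives $\sum_i\mu_i^2=\sum_\pi s_\pi^2$ (equivalently, the key inequality is an equality for $C(t)=t$), so the quadratic part cancels and the difference of the two sides equals $\int G(\xi)\,d\nu(\xi)$ with $G(\xi):=\sum_i\bigl(1-\cos\mu_i\xi\bigr)-\sum_\pi\bigl(1-\cos s_\pi\xi\bigr)$. A Taylor expansion at $\xi=0$ gives $G(\xi)=c\,\xi^4+O(\xi^6)$ with $c>0$, the positivity of $c$ coming from a triangle-type inequality among the masses (e.g. $|\mu_i+\mu_j|+|\mu_i+\mu_k|<|\mu_i+\mu_j+\mu_k+\mu_l|+\dots$ of this flavour); thus $G>0$ on a punctured neighbourhood of the origin. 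Since $G$ need not be globally nonnegative, one needs $\nu$ to put enough mass near $0$ to dominate the bounded negative part of $G$ — this is the small strengthening of Schoenberg's condition. For the standard cost $C(t)=t^p$ one has $d\nu(\xi)=c_p|\xi|^{-1-2p}\,d\xi$, and $\int G\,d\nu>0$ then follows from an explicit estimate, giving the contradiction and hence Theorem~\ref{theorem:main}; the exact hypothesis on $C$ under which this works is the content of Theorem~\ref{theorem:general}.

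\textbf{Main obstacle.} The crux is the final inequality. It is \emph{not} a pointwise consequence of negative-definiteness: the integrand $G$ genuinely changes sign (already for $C=t^p$ and suitably chosen masses), so one cannot simply integrate an inequality but must quantify how concentrated $\nu$ is near $\xi=0$ — which is exactly why the assumption has to be slightly stronger than Schoenberg's and where the work for Theorem~\ref{theorem:general} lies. The subsidiary difficulty is the bookkeeping of the degenerate competitors ($s_\pi=0$, or $A_\pi=0$ with $s_\pi>0$, or an entirely collinear configuration), each of which must be excluded by an elementary but separate shortening or reduction argument.
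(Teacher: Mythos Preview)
Your variational reduction is correct up to and including the identity $\sum_\pi|A_\pi|^2=\sum_i C(|\mu_i|)^2$, but the ``key inequality'' you then try to prove is simply false, so the argument cannot be completed along these lines. Concretely, take $\mu=(1,1,1,-3)$ and $C(t)=t^p$. Then $s_\pi=2$ for all three pairings, and the inequality $\sum_i C(|\mu_i|)^2>\sum_\pi C(s_\pi)^2$ reads $3+9^p>3\cdot 4^p$; this holds for $p<1/2$, is an equality at $p=1/2$, and \emph{fails} for every $p\in(1/2,1)$ (e.g.\ at $p=0.9$ one gets $10.22<10.45$). So the strict reverse inequality does not hold ``for every admissible configuration $(\mu_i)$''. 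The mistake is traceable: a short computation gives the closed form
\[
G(\xi)=8\prod_{i=1}^4\sin\!\Big(\frac{\mu_i\xi}{2}\Big),
\]
whence the leading Taylor coefficient is $c=\tfrac12\prod_i\mu_i$, which is \emph{negative} for $\mu=(1,1,1,-3)$. Your heuristic that ``enough mass of $\nu$ near $0$'' rescues the integral therefore goes the wrong way here, and in any case the paper's extra hypothesis is not a concentration condition at all: the strengthening over Schoenberg is that $\lambda$ has \emph{uncountable support}, which is used to force affine independence of the embedded points, not to control $\int G\,d\nu$.

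The paper's proof is genuinely different and avoids this trap. It uses only \emph{consecutive} pairs in the cyclic order around $b$: with $A_j:=e^{i(m_1+\cdots+m_j)x}-1\in L^2(\lambda)$, the triangle $A_{j-1}A_jA_{j+1}$ has side lengths $C(|m_j|),C(|m_{j+1}|),C(|m_j+m_{j+1}|)$, so its exterior angle at $A_j$ equals the angle lower bound $h(m_j,m_{j+1})$ for $\angle V_jOV_{j+1}$ coming from your $|A_\pi|\le C(s_\pi)$ (for adjacent pairs). The exterior-angle inequality for closed polygons (Lemma~\ref{f1}) gives $\sum_j h(m_j,m_{j+1})\ge 2\pi$, forcing equality everywhere; equality in Lemma~\ref{f1} puts all $A_j$ in a $2$-plane, which the uncountable-support hypothesis (Proposition~\ref{independence}) forbids once $k\ge 4$. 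In short, the paper compares angles rather than squared lengths, and it is precisely this that makes the ``slightly stronger than Schoenberg'' condition the right one.
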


\section{Preliminaries}

We need the following lemmas. 

\begin{lemma}[Folklore]
Let $PQR$ be a triangle and $w_1$, $w_2$, $w_3$ be non-negative reals. 
For every point $X \in \mathbb{R}^2$ consider the value
\[
L(X) := w_1 \cdot  |PX| + w_2 \cdot  |QX| + w_3 \cdot  |RX|.
\]
Then 
\begin{itemize}
    \item [(i)] a minimum of $L(X)$ is achieved at a unique point $X_{min}$;
    \item [(ii)] if $X_{min} = P$ then $w_1 \geq w_2 + w_3$ or there is a triangle $\Delta$ with sides $w_1$, $w_2$, $w_3$ and $\angle P$ is at least the outer angle between $w_2$ and $w_3$ in $\Delta$.
\end{itemize}
\label{main_l}
\end{lemma}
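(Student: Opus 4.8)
The plan is to use convexity of $L$ from start to finish. Each summand $X\mapsto|AX|$ is a convex function on $\mathbb{R}^2$ (a translate of the Euclidean norm), so $L$ is convex; it is also coercive whenever some $w_i>0$, since then $L(X)\ge w_i|A_iX|\to\infty$, which yields existence of a minimizer. For uniqueness in (i) I would observe that $X\mapsto|AX|$ is affine on a line $\ell$ exactly when $A\in\ell$, so $L$ can fail to be strictly convex only along a line through every vertex carrying positive weight; as $P,Q,R$ are not collinear this is impossible once at least two weights are positive, which is the relevant regime (if only one weight is positive the minimizer is visibly that vertex).

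For (ii) I would invoke the first-order optimality criterion for convex functions: $X_0$ minimizes $L$ iff every one-sided directional derivative $D_uL(X_0)$ over unit vectors $u$ is nonnegative. At $X_0=P$ the non-smooth term $w_1|PX|$ contributes $w_1|u|=w_1$ in every direction, while the two smooth terms contribute $-w_2\cos\theta_Q$ and $-w_3\cos\theta_R$ with $\theta_Q=\angle(u,\vec{PQ})$, $\theta_R=\angle(u,\vec{PR})$. Hence $X_{min}=P$ is equivalent to
\[
w_1\ \ge\ \max_{u}\bigl(w_2\cos\theta_Q+w_3\cos\theta_R\bigr).
\]
Parametrizing $u$ by its polar angle $\varphi$ with $\vec{PQ}$ at angle $0$ and $\vec{PR}$ at angle $\alpha:=\angle P\in(0,\pi)$, the expression $w_2\cos\varphi+w_3\cos(\varphi-\alpha)$ is a single sinusoid in $\varphi$ of amplitude $\sqrt{w_2^2+w_3^2+2w_2w_3\cos\alpha}$, so the condition collapses to
\[
w_1^2\ \ge\ w_2^2+w_3^2+2w_2w_3\cos(\angle P).
\]

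The last step is to read off the claimed dichotomy. If $w_1\ge w_2+w_3$ we are in the first alternative; otherwise $w_1<w_2+w_3$, and since $w_1\ge|w_2-w_3|$ as well (because $\cos\angle P>-1$) there is a genuine triangle $\Delta$ with side lengths $w_1,w_2,w_3$. Letting $\gamma$ be its angle opposite the side $w_1$ — i.e.\ the angle between the sides $w_2$ and $w_3$ — the law of cosines $w_1^2=w_2^2+w_3^2-2w_2w_3\cos\gamma$ turns the displayed inequality into $-\cos\gamma\ge\cos(\angle P)$, i.e.\ $\pi-\gamma\le\angle P$ (both angles lie in $[0,\pi]$, on which $\cos$ is decreasing). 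As $\pi-\gamma$ is precisely the outer angle between $w_2$ and $w_3$ in $\Delta$, this is the second alternative.

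I do not expect a serious obstacle: the computations above are routine. The points that do need care are the justification of uniqueness in (i) — and, relatedly, deciding whether to rule out genuinely degenerate weight data (two equal weights, one zero), where the minimizer is a whole segment; the fact that the directional-derivative condition is an honest equivalence, which is exactly where convexity is used; and bookkeeping at the boundary cases $w_1=w_2+w_3$ and $\angle P=\pi-\gamma$, which both sit consistently on the ``$\ge$'' side of the two alternatives.
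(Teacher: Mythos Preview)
Your argument is correct. The paper does not actually prove this lemma: it is stated as folklore and left unproved, so there is no in-paper proof to compare against. Your convexity/directional-derivative approach is the standard one for weighted Fermat points, and the computation reducing $X_{\min}=P$ to the inequality $w_1^2\ge w_2^2+w_3^2+2w_2w_3\cos(\angle P)$, followed by the law-of-cosines translation into the outer-angle statement, is exactly right. The one genuine caveat you already flagged is that part~(i) as literally stated (with merely non-negative weights) is false in the degenerate case where one weight vanishes and the other two coincide; this does not affect how the lemma is used in the paper, where the relevant weights are values of an admissible cost function at nonzero masses and hence positive.
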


Hereafter the metric space is the Euclidean plane $\mathbb{R}^2$. 

The following concept only slightly changes from that of
Schoenberg \cite{schoenberg}, introduced for describing which metrics
of the form $\rho(x,y)=f(x-y)$ on the real line can be embedded to
a Hilbert space. 

\begin{definition}
Let $\lambda$ be a Borel measure on $\mathbb{R}$ for which 
\begin{equation}\label{convergence}
\int \min(x^2,1)d\lambda(x)<\infty.
\end{equation}
Assume additionally that the support of $\lambda$ is uncountable. 
A function $f\colon \mathbb{R}_{\geqslant 0}\to \mathbb{R}_{\geqslant 0}$ of the form
\begin{equation}\label{def_of_adm_fun}
    f(t)=\sqrt{\int \sin^2 (tx)\, d\lambda(x)}=\frac12\|e^{2itx}-1\|_{L^2(\lambda)}
\end{equation}
is called \emph{admissible}.
\end{definition}

The only difference with \cite{schoenberg} is that we require 
that the support of the measure $\lambda$ is uncountable
which guarantees that the corresponding embedding has full dimension
(see below).

\begin{remark}
As $\lambda$ is a Borel measure, a continuous function
$\sin^2 (tx)$ is $\lambda$-measurable.
    Under conditions \eqref{convergence}, the integral in \eqref{def_of_adm_fun} 
    is finite, so $f(t)<\infty$ for all $t\geqslant 0$.  
\end{remark}
 
Further we are going to consider only admissible cost functions. Note that admissibility implies some properties one may expect from a cost function. In particular, $f(0)=0$ and 
$f$ is subadditive: for non-negative $t,s$ we have
\begin{align*}
    f(t)+f(s)&=\frac12\|e^{2itx}-1\|_{L^2(\lambda)} + \frac12\|e^{2isx}-1\|_{L^2(\lambda)} \\&=
\frac12\|e^{2itx}-1\|_{L^2(\lambda)} + \frac12\|e^{2i(s+t)x}-e^{2itx}\|_{L^2(\lambda)} 
\geq \frac12\|e^{2i(t+s)x} - 1\|_{L^2(\lambda)}=f(t+s).
\end{align*}
On the other hand it does not imply monotonicity (for instance, if $\supp \lambda \subset [0.9, 1.1]$ 
then $f(\pi) < f(\pi/2)$).

Hereafter $L^2(\lambda)$ for a measure $\lambda$ on $\mathbb{R}$ is understood as a \emph{real} Hilbert
space of complex-valued square summable w.r.t. $\lambda$ functions (strictly speaking, of classes of equivalences of such functions modulo coincidence $\lambda$-almost everywhere).

\begin{proposition}\label{independence}
If $\lambda$ is a Borel measure on $\mathbb{R}$ with uncountable support such that
\[
\int \min(x^2,1)d\lambda(x)<\infty,
\]
then any finite collection of functions of the form $e^{iax}-1$, $a\in \mathbb{R}$, is affinely independent in $L^2(\lambda)$.
\end{proposition}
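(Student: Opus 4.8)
The plan is to reduce the claim to two facts: the classical linear independence of the characters $x\mapsto e^{iax}$ with pairwise distinct frequencies, and a rigidity statement saying that a nonzero finite exponential sum cannot vanish $\lambda$-almost everywhere when $\supp\lambda$ is uncountable. Recall that a finite family $v_1,\dots,v_n$ in a real vector space is affinely independent precisely when the only scalars $c_1,\dots,c_n$ with $\sum_j c_j=0$ and $\sum_j c_jv_j=0$ are all zero. So I would fix pairwise distinct reals $a_1,\dots,a_n$, take coefficients $c_1,\dots,c_n$ with $\sum_j c_j=0$ and $\sum_j c_j\bigl(e^{ia_jx}-1\bigr)=0$ in $L^2(\lambda)$, and aim to prove $c_1=\dots=c_n=0$. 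Since the constant parts sum to zero, this hypothesis is equivalent to $h(x):=\sum_j c_j e^{ia_jx}=0$ for $\lambda$-almost every $x$.

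The main step, and the only place where the hypothesis on the support enters, is to upgrade this to $h\equiv 0$ on all of $\mathbb R$. Here $h$ is a finite exponential sum, hence the restriction to $\mathbb R$ of an entire function. If $h$ were not identically zero, then its real zero set $Z=\{x\in\mathbb R:h(x)=0\}$ would have no accumulation point, so $Z$ would be closed and would meet every bounded interval in a finite set; in particular $Z$ would be countable. But $h=0$ $\lambda$-a.e.\ means $\lambda(\mathbb R\setminus Z)=0$, and since $\mathbb R\setminus Z$ is open this forces $\supp\lambda\subseteq Z$, contradicting that $\supp\lambda$ is uncountable. Hence $h\equiv 0$ on $\mathbb R$.

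Finally I would invoke linear independence of characters: from $\sum_j c_j e^{ia_jx}=0$ for every $x\in\mathbb R$, with the $a_j$ distinct, one concludes $c_j=0$ for all $j$, for instance by averaging $h(x)e^{-ia_kx}$ over $[-T,T]$ and letting $T\to\infty$ (which annihilates every term except the $k$-th and leaves exactly $c_k$), or by differentiating $h$ repeatedly at $0$ and using invertibility of the Vandermonde matrix in the distinct nodes $ia_1,\dots,ia_n$. This yields the asserted affine independence, and the argument is insensitive to whether the coefficients are taken real or complex. I expect the middle step to be the only real obstacle, namely making precise that the zero set of a nonzero entire function is small enough (countable and closed) to contradict the uncountability of $\supp\lambda$; the rest is routine.
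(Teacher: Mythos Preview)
Your proposal is correct and follows essentially the same route as the paper: reduce to an exponential sum $h(x)=\sum_j c_j e^{ia_jx}$ vanishing $\lambda$-a.e., use analyticity plus the uncountable support of $\lambda$ to force $h\equiv 0$, and then invoke linear independence of exponentials with distinct frequencies. The paper carries out the last step via Taylor coefficients at $0$ and the polynomial $W(t)=\prod_{j\geq 2}(t-a_j)$, which is exactly your Vandermonde alternative; your support argument ($\mathbb{R}\setminus Z$ open of measure zero, hence $\supp\lambda\subseteq Z$) is a slightly more explicit version of the paper's one-line appeal to uncountability.
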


\begin{proof}
    Assume the contrary. Then there exist distinct real numbers $a_1,\ldots,a_n$ and non-zero real coefficients $t_1,\ldots,t_n$ such that $\sum t_j=0$ and $\sum t_j(e^{ia_jx}-1)=0$ $\lambda$-almost everywhere. 
    But the analytic function $\sum t_j(e^{ia_jx}-1)$ is either identically zero, or has at most countably many (and separated) zeroes. In the latter case, it is not zero $\lambda$-almost everywhere, since the support of $\lambda$ is uncountable. The former case is not possible: indeed, if $\sum t_je^{ia_jx}\equiv 0$, then taking the Taylor expansion at 0 we get $\sum t_j a_j^k=0 $ for all $k=0,1,2,\ldots$. Therefore $\sum t_j W(a_j)=0$ for any polynomial $W$.
    Choosing $W(t)=\prod_{j=2}^n (t-a_j)$ we get $t_1=0$, a contradiction.
\end{proof}

One can see from the proof that the condition on uncountability of the support may be weakened.

\begin{lemma}\label{d1}
    Let $C$ be an admissible cost function.
    Define $h(m_1,m_2)$ as the value of the outer angle between $m_1$ and $m_2$ in the triangle with sides $C(|m_1|)$, $C(|m_2|)$, $C(|m_1+m_2|)$ (it exists by Proposition~\ref{independence}) for real $m_1,m_2$.
    Suppose that $OV_1$, $OV_2$ are edges in a minimal flow with masses $m_1$ and $m_2$. Then the angle between $OV_1$ and $OV_2$ is at least $h(m_1,m_2)$.
\end{lemma}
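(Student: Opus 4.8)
The plan is to reduce everything to the folklore Lemma~\ref{main_l} by a cut-and-paste argument on the flow. I would assume $m_1+m_2\neq 0$ (this is what makes $h(m_1,m_2)$ defined; the case $m_1+m_2=0$ is degenerate and is handled by the same argument with the edge $OX$ introduced below simply dropped). Put $w_1:=C(|m_1+m_2|)$, $w_2:=C(|m_1|)$, $w_3:=C(|m_2|)$ and
\[
L(X):=w_1\cdot|OX|+w_2\cdot|V_1X|+w_3\cdot|V_2X|,\qquad X\in\mathbb{R}^2,
\]
noting that $L(O)=w_2|OV_1|+w_3|OV_2|$ is precisely the contribution of the edges $OV_1,OV_2$ to the Gilbert functional of the given minimal flow.

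The first step is to show that $O$ is the minimiser $X_{min}$ of $L$ (unique by Lemma~\ref{main_l}(i)). Fix any point $X\in\mathbb{R}^2$ with $X\notin V$ and $\{O,X\}\notin E$; such points are dense. I would build a competitor flow by deleting $OV_1$ and $OV_2$, adjoining a new vertex at $X$, and adding the edges $XV_1$ with $m(X,V_1)=m_1$, $XV_2$ with $m(X,V_2)=m_2$, and $OX$ with $m(O,X)=m_1+m_2$. One checks that mass is conserved at $O$, $V_1$, $V_2$, $X$, that all three new edges carry nonzero mass and are genuinely new, so this is again a $(\mu^+,\mu^-)$-flow, and that its Gilbert functional differs from the original one by exactly $L(X)-L(O)$. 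Minimality forces $L(X)\geq L(O)$ on this dense set, hence, by continuity of $L$, $L(O)\leq L(X)$ for every $X\in\mathbb{R}^2$; so $X_{min}=O$.

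The second step is to apply Lemma~\ref{main_l}(ii) with $P=O$, $Q=V_1$, $R=V_2$. Since $X_{min}=P$, either $w_1\geq w_2+w_3$, or $\angle V_1OV_2$ is at least the outer angle between $w_2$ and $w_3$ in the triangle with sides $w_1,w_2,w_3$; and that outer angle is by definition $h(m_1,m_2)$. To rule out the first alternative, I would apply Proposition~\ref{independence} to the three functions $e^{i\cdot 0\cdot x}-1$, $e^{2im_1x}-1$, $e^{2i(m_1+m_2)x}-1$, whose frequencies $0,2m_1,2(m_1+m_2)$ are pairwise distinct since $m_1,m_2,m_1+m_2$ are all nonzero: the points $0$, $\tfrac12(e^{2im_1x}-1)$, $\tfrac12(e^{2i(m_1+m_2)x}-1)$ of $L^2(\lambda)$ are then affinely independent, their pairwise distances being exactly $w_2$, $w_3$, $w_1$, so the triangle with these side lengths is non-degenerate and $w_1<w_2+w_3$ strictly. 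Hence the second alternative holds and $\angle V_1OV_2\geq h(m_1,m_2)$, as claimed.

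The part needing the most care is the competitor construction: verifying that for every $X$ in the chosen dense set the modified data really is an admissible $(\mu^+,\mu^-)$-flow (no edge created twice, no zero mass) and that the functional changes by exactly $L(X)-L(O)$, together with separating off the genuinely degenerate configurations — $m_1+m_2=0$, or $X_{min}$ happening to coincide with a pre-existing vertex — so that they can be dispatched by hand. Apart from that, the proof is a direct combination of Lemma~\ref{main_l} and Proposition~\ref{independence}.
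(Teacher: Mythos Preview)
Your argument is correct and essentially the same as the paper's: both reduce to Lemma~\ref{main_l} via the cut-and-paste competitor through a point $X$, the paper running the contrapositive (assume the angle is too small, deduce $X_{\min}\neq O$, splice in $X_{\min}$) while you run it forwards (first prove $O=X_{\min}$ from minimality, then read off the angle bound). Your version is more careful on the side conditions---explicitly ruling out $w_1\geq w_2+w_3$ via Proposition~\ref{independence} and checking the competitor is a valid flow---and your weight assignment $w_1=C(|m_1+m_2|)$, $w_2=C(|m_1|)$, $w_3=C(|m_2|)$ corrects what appears to be a typo in the paper's proof.
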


\begin{proof}
    Assume the contrary, then by Lemma~\ref{main_l} with $P = O$, $Q = V_1$, $R = V_2$, $w_1 = C(|m_1|)$, $w_2 = C(|m_2|)$, $w_3 = C(|m_1+m_2|)$ we have $X_{min} \neq O$. Then we can replace $[OV_1] \cup [OV_2]$ with $[X_{min}O] \cup [X_{min}V_1] \cup [X_{min}V_2]$ with the corresponding masses in our flow; this contradicts the minimality of the flow.
\end{proof}

\begin{lemma}\label{x_to_p}
For $0<p<1$, the function $f(x)=x^p$ is
admissible.    
\end{lemma}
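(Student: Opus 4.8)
The plan is to exhibit an explicit measure $\lambda$ realizing $x^p$ through formula \eqref{def_of_adm_fun}. Since the defining relation reads $f(t)^2=\int\sin^2(tx)\,d\lambda(x)$, I need a Borel measure $\lambda$ on $\mathbb{R}$ with $\int\sin^2(tx)\,d\lambda(x)=t^{2p}$ for every $t\ge 0$. The natural candidate, dictated by the scaling $t\mapsto \kappa t$, is a pure power law: set
\[
d\lambda(x)=c_p^{-1}\,x^{-1-2p}\,dx \quad\text{on } (0,\infty),
\]
and let $\lambda$ vanish on $(-\infty,0]$, where $c_p>0$ is a normalizing constant fixed below. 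The support of this $\lambda$ is $[0,\infty)$, which is uncountable, as required by the definition.

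First I would verify the integrability condition \eqref{convergence}. Near $0$ one has $\min(x^2,1)=x^2$, and $\int_0^1 x^2\cdot x^{-1-2p}\,dx=\int_0^1 x^{1-2p}\,dx<\infty$ precisely because $p<1$; near $\infty$ one has $\min(x^2,1)=1$, and $\int_1^\infty x^{-1-2p}\,dx<\infty$ precisely because $p>0$. Thus $\lambda$ satisfies the required finiteness exactly in the range $0<p<1$.

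Next I would compute $I(t):=\int_0^\infty \sin^2(tx)\,x^{-1-2p}\,dx$. For $t>0$ the substitution $y=tx$ yields $I(t)=t^{2p}\int_0^\infty \sin^2(y)\,y^{-1-2p}\,dy=:c_p\,t^{2p}$, and the constant $c_p=\int_0^\infty \sin^2(y)\,y^{-1-2p}\,dy$ is finite and strictly positive: the integrand behaves like $y^{1-2p}$ near $0$ (integrable since $p<1$) and is dominated by $y^{-1-2p}$ near $\infty$ (integrable since $p>0$), while being non-negative and not identically zero. With this choice of $c_p$ in the definition of $\lambda$ we get $\int\sin^2(tx)\,d\lambda(x)=c_p^{-1}I(t)=t^{2p}$ for all $t\ge 0$ (the case $t=0$ being trivial), hence $f(t)=\sqrt{t^{2p}}=t^p$, proving admissibility of $x^p$.

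I do not expect any genuine obstacle here; the only points needing a line of care are the two-sided convergence check, which is exactly what pins down the range $0<p<1$, and the remark that $\supp\lambda=[0,\infty)$ is uncountable so that Proposition~\ref{independence} remains applicable for this cost function. A closed form for $c_p$ in terms of the Gamma function is available but not needed.
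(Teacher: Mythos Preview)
Your proof is correct and follows essentially the same approach as the paper: both use the power-law measure $d\lambda = c\,x^{-1-2p}\,dx$ on $(0,\infty)$ and the scaling substitution $y=tx$ to obtain $\int \sin^2(tx)\,d\lambda(x)=t^{2p}$ up to a constant. You supply more detail (the explicit convergence checks near $0$ and $\infty$, positivity and finiteness of $c_p$, and the uncountable-support remark), but the argument is the same.
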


\begin{proof}
    Consider the measure $d\lambda=x^{-2p-1}dx$ on $[0,\infty)$. Then $\int_0^\infty \min(x^2,1)d\lambda<\infty$ and for $t>0$ we have
    \[
    \int_0^\infty \sin^2(tx)\,d\lambda(x)=
    \int_0^\infty \sin^2(tx)\, x^{-2p-1}dx=
    t^{2p}\int_0^\infty \sin^2 y \, y^{-2p-1}dy,
    \]
    thus the measure $\lambda$ multiplied by an appropriate positive constant proves the result.
\end{proof}

\begin{example}
For another natural choice $d\lambda=4ce^{-2cx}dx$, $c>0$, we get 
an admissible function $f(t)=t/\sqrt{t^2+c^2}$.
\end{example}

The following lemma is essentially well-known, but for the sake of completeness and for covering degeneracies and the equality cases we provide a proof.

\begin{lemma}\label{f1}
Let $X$ be a finite-dimensional Euclidean space, let the points $A_0,A_1,A_2,\ldots,A_{n-1},A_n=A_0,A_{n+1}=A_1$ in $X$ be chosen so that $A_i\ne A_{i+1}$ for all $i=1,2,\ldots,n$. Denote $\varphi_i:=\pi-\angle A_{i-1}A_iA_{i+1}$ for $i=1,2,\ldots,n$.
Then $\sum \varphi_i\geqslant 2\pi$, and if the equality holds then the points $A_1,\ldots,A_n$ belong to the same two-dimensional affine plane.
\end{lemma}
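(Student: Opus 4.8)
The plan is to think of the closed polygonal path $A_1 A_2 \cdots A_n A_1$ as a closed curve whose total turning is controlled by the exterior angles $\varphi_i$. The key idea is that the unit tangent direction is constant along each edge and jumps by an angle of exactly $\varphi_i$ (in some direction, through an axis of rotation) at each vertex $A_i$. Going once around the closed path, the tangent direction returns to its starting value, so the composition of these $n$ rotations-by-$\varphi_i$ is the identity. On the unit sphere of directions, each step moves a point a spherical distance $\varphi_i$ (since the two consecutive edge directions differ by $\varphi_i$); hence the sequence of edge-direction unit vectors $u_1, u_2, \dots, u_n, u_1$ forms a closed spherical polygon with side lengths $\varphi_1, \dots, \varphi_n$. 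The total length of a closed curve on the unit sphere that is not contained in a single great circle is strictly greater than $2\pi$, and equals $2\pi$ only when the curve lies on a great circle (traversed monotonically). This immediately gives $\sum \varphi_i \ge 2\pi$.

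Concretely, I would set $u_i := (A_{i+1}-A_i)/|A_{i+1}-A_i|$ for $i=1,\dots,n$, with indices mod $n$, so that the spherical distance $d_{S}(u_{i-1},u_i)$ equals the angle between consecutive edges, which is $\pi - \angle A_{i-1}A_iA_{i+1} = \varphi_i$. Then $\sum_{i=1}^n \varphi_i = \sum_{i=1}^n d_S(u_{i-1}, u_i)$ is the length of the closed geodesic polygon on the sphere through $u_1, \dots, u_n$. For the lower bound $2\pi$: any closed rectifiable curve on the unit sphere $S^{k-1}$ that meets two antipodal points must have length at least $2\pi$, and more to the point, a standard fact (Horn's lemma / the Fenchel-type inequality) says any closed curve on the unit sphere not contained in an open hemisphere has length $\ge 2\pi$; and a closed curve contained in an open hemisphere cannot arise here because the $u_i$ sum to... actually the cleanest route is: the closed polygon $u_1\cdots u_n u_1$ cannot lie in an open hemisphere, since if all $u_i$ satisfied $\langle u_i, v\rangle > 0$ for a fixed unit $v$, then $\langle A_{i+1}-A_i, v\rangle > 0$ for all $i$, summing to $0 = \langle \sum (A_{i+1}-A_i), v \rangle > 0$, a contradiction. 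So the curve is not in an open hemisphere, hence has length $\ge 2\pi$.

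For the equality case I need the rigidity statement: if a closed spherical curve of length exactly $2\pi$ is not contained in an open hemisphere, then it lies on a great circle. This is the equality case of the hemisphere/Fenchel inequality; I would either cite it or prove it by the classical argument — take two points $p, q$ on the curve dividing it into two arcs each of length $\le \pi$ (possible since total length is $2\pi$; actually need a midpoint argument: pick $p$ arbitrary and $q$ the point at arclength $\pi$ from $p$ along the curve, so each arc has length exactly $\pi$); each such arc has length $\pi$ and so must be a great-circle arc from $p$ to $q$ — wait, only if $q$ is antipodal to $p$. The honest statement: a closed curve of length $2\pi$ not lying in an open hemisphere must pass through a pair of antipodal points $p, -p$ (standard), splitting it into two arcs each of length $\pi$ joining antipodal points, each of which must then be a half great circle; so the whole curve is a great circle. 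Once the edge directions $u_1, \dots, u_n$ all lie on one great circle of $S^{k-1}$, they all lie in a fixed $2$-dimensional linear subspace $W$; then since $A_{i+1} - A_i$ is a positive multiple of $u_i \in W$, and $A_{i+1} = A_1 + \sum_{j<i}(A_{j+1}-A_j)$, all points $A_i$ lie in the affine plane $A_1 + W$, which is the desired conclusion.

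The main obstacle is being careful with the equality case and with degeneracies: the hypothesis only guarantees $A_i \ne A_{i+1}$ (so each $u_i$ is well-defined), but consecutive edges may be parallel ($\varphi_i = 0$) or antiparallel ($\varphi_i = \pi$), and a priori three consecutive points could be collinear, etc. The spherical-polygon argument handles $\varphi_i = 0$ painlessly (a degenerate side of length $0$) and $\varphi_i = \pi$ as an antipodal jump; the open-hemisphere contradiction above needs only $\sum(A_{i+1}-A_i) = 0$, which holds by the telescoping with $A_{n+1} = A_1$. I will also want to note that in the equality case, consecutive $u_i$ being at distance $\varphi_i$ along the great circle, the total $2\pi$ means the $u_i$ wind monotonically once around the great circle, but this finer structure is not needed for the stated conclusion — membership in a common $2$-plane suffices, and that follows the moment all $u_i$ lie in a common $2$-dimensional subspace.
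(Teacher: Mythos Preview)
Your approach via the tangent indicatrix is the classical Fenchel--Milnor route and is genuinely different from the paper's argument. The paper instead chooses a uniformly random unit vector $u$ and compares, for each $j$, the probability $\prob U(j)$ that $A_j$ is the \emph{global} maximizer of $\langle u,\cdot\rangle$ among all the $A_i$ with the probability $\prob V(j)$ that $A_j$ is merely a \emph{local} maximizer among $A_{j-1},A_j,A_{j+1}$. The latter event is a dihedral angle of measure $\varphi_j$, so $\prob V(j)=\varphi_j/2\pi$; since some vertex is always the global maximizer, $1\le\sum\prob U(j)\le\sum\prob V(j)=\tfrac{1}{2\pi}\sum\varphi_j$. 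This is fully self-contained (no external lemma), and the equality case is handled directly: if some $A_i$ lies off the plane of a non-collinear triple $A_{j-1}A_jA_{j+1}$, then a positive-measure set of hyperplanes through $A_j$ separates that triangle from $A_i$, forcing $\prob U(j)<\prob V(j)$ strictly. Your route buys a conceptual link to total-curvature theory; the paper's buys a short, dependency-free proof of both the inequality and the rigidity in one stroke.

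There is, however, a real gap in your equality sketch. The ``rigidity statement'' you formulate --- that a closed spherical curve of length exactly $2\pi$ not lying in an open hemisphere must lie on a single great circle --- is false. Take $u_1=(1,0,0)$, $u_2=(0,1,0)$, $u_3=(-1,0,0)$, $u_4=(0,0,1)$: consecutive spherical distances are all $\pi/2$, the total is $2\pi$, the points are not in any open hemisphere (since $u_1=-u_3$), yet they do not lie on one great circle. Your own bisection argument correctly produces two great semicircles joining an antipodal pair, but nothing forces those semicircles to be coplanar. What actually rescues the equality case is the constraint you noted but then dropped: the $u_i$ come from a closed polygon, so $\sum_i\lambda_i u_i=0$ with all $\lambda_i>0$; in the bigon example above this would force $\lambda_2=\lambda_4=0$. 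In other words, you need the equality case of \emph{Fenchel's} theorem (which uses that the spherical curve is a tangent indicatrix), not the equality case of \emph{Horn's} lemma (which is what you stated, and which fails). Either cite Fenchel's equality case for polygons directly, or insert a short argument using the closure condition to exclude the two-plane bigon configurations.
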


\begin{proof}
    Let $u$ be a randomly chosen unit vector in $X$ (with respect to a uniform distribution on the sphere). For $j=1,2,\ldots,n$ denote by $U(j)$ the following event: $\langle u,A_j\rangle =\max_{1\leqslant i\leqslant n} \langle u,A_i\rangle$, where $\langle \cdot,\cdot\rangle$ denotes the inner product in $X$; and by $V(j)$ the event $\langle u,A_j\rangle =\max_{j-1\leqslant i\leqslant j+1} \langle u,A_i\rangle$. Obviously, $\prob U(j)\leqslant \prob V(j)$.
    Also, $\prob V(j)=\frac{\varphi_j}{2\pi}$, since the set of directions of $u$ for which $V(j)$ holds is the dihedral angle of measure $\varphi_j$. Thus, since always at least one event $U(j)$ holds, we get 
    \[
    1\leqslant \sum_{j=1}^n \prob U(j)\leqslant \sum_{j=1}^n \prob V(j)=\frac1{2\pi}\sum_{j=1}^n \varphi_j.
    \]
    This proves the inequality. It remains to prove that it is strict assuming that not all the points belong to a two-dimensional plane. Note that if every three consecutive points $A_{j-1},A_j,A_{j+1}$ are collinear, then all the points $A_1,\ldots,A_n$ are collinear that contradicts to our assumption. If $A_{j-1},A_j,A_{j+1}$ are not collinear, denote by $\alpha$ the two-dimensional plane they belong to. There exists $i$ for which $A_i\notin \alpha$. Then $\prob U(j)<\prob V(j)$, since there exist planes passing through $A_j$ which separate the triangle $A_{j-1}A_jA_{j+1}$ and the point $A_i$, and the measure of directions of such planes is strictly positive. Therefore, our inequality is strict.
\end{proof}

\section{Main result}

\begin{theorem} \label{theorem:general}
    Let $\mu^+,\mu^-$ be two measures with finite support on the Euclidean plane $\mathbb{R}^2$, and assume that the cost function $C$ is admissible. Then if a $(\mu^+,\mu^-)$-flow has a branching point of degree at least 4, then there exists a $(\mu^+,\mu^-)$-flow with strictly smaller value of Gilbert functional. 
\end{theorem}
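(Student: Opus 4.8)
The plan is to argue by contradiction: suppose a minimal flow has a branching point $b$ of degree $d \geq 4$, with incident edges $bV_1, \dots, bV_d$ carrying masses $m_1, \dots, m_d$. Since $\mu^+ - \mu^-$ has no atom at $b$, mass conservation gives $\sum_i m_i = 0$ when signs are taken with a consistent orientation (all edges oriented away from $b$). The first step is to record the local optimality constraints: by Lemma~\ref{d1}, the angle at $b$ between $bV_i$ and $bV_j$ is at least $h(m_i, m_j)$, the outer angle between $m_i$ and $m_j$ in the Euclidean triangle with sides $C(|m_i|), C(|m_j|), C(|m_i + m_j|)$. More importantly, I would want the constraint coming from replacing the "star" at $b$ by a short spanning configuration — i.e. the true first-order stationarity condition analogous to the $120^\circ$ rule in the Steiner problem.

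The key idea is to reinterpret the angle bound via an isometric picture in a higher-dimensional Hilbert space. Using admissibility, $C(|m|) = \frac12\|e^{2imx} - 1\|_{L^2(\lambda)}$, so the triangle with sides $C(|m_i|), C(|m_j|), C(|m_i+m_j|)$ is congruent to the triangle with vertices $0$, $\tfrac12(e^{2im_i x} - 1)$, and $\tfrac12(e^{2i(m_i+m_j)x} - 1)$ in $L^2(\lambda)$ (the third side length being $\tfrac12\|e^{2i(m_i+m_j)x} - e^{2im_i x}\| = C(|m_j|)$). Partial-summing the masses, set $P_0 = 0$ and $P_k = \tfrac12(e^{2i(m_1 + \dots + m_k)x} - 1) \in L^2(\lambda)$ for $k = 1, \dots, d$; because $\sum m_i = 0$ we get $P_d = P_0$. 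Proposition~\ref{independence} guarantees the points $P_0, \dots, P_{d-1}$ are affinely independent, hence genuinely span a $(d-1)$-dimensional affine subspace and in particular do not all lie in a $2$-plane once $d \geq 4$. Now $\|P_{k-1} P_k\| = C(|m_k|)$ and $\|P_{k-2}P_k\| = C(|m_{k-1} + m_k|)$, so the exterior angle $\pi - \angle P_{k-1} P_k P_{k+1}$ of this closed polygonal chain equals exactly $h(m_k, m_{k+1})$.

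Then I would apply Lemma~\ref{f1} to the closed chain $P_1, \dots, P_d = P_1$ (with $P_{d+1} = P_2$): since these $d$ points do \emph{not} all lie in a common $2$-plane, the lemma gives the \emph{strict} inequality $\sum_{k} \bigl(\pi - \angle P_{k-1}P_k P_{k+1}\bigr) > 2\pi$, i.e. $\sum_k h(m_k, m_{k+1}) > 2\pi$ where indices are cyclic. On the other hand, the edges $bV_1, \dots, bV_d$ around $b$ in the plane can be cyclically ordered, and the angles they make sum to exactly $2\pi$; by Lemma~\ref{d1} each consecutive angle is at least $h(m_{\sigma(k)}, m_{\sigma(k+1)})$ for the planar cyclic order $\sigma$. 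To get the contradiction I need $\sum_k h(m_{\sigma(k)}, m_{\sigma(k+1)}) \leq 2\pi$ for \emph{some} cyclic ordering, but the strict inequality above is for the partial-sum ordering $1,2,\dots,d$ — so the plan is: choose the labeling of the edges so that the partial-sum ordering coincides with the planar cyclic ordering $\sigma$. That is, after relabeling we may assume the edges appear around $b$ in the order $V_1, \dots, V_d$; then $2\pi = \sum_k \angle V_k b V_{k+1} \geq \sum_k h(m_k, m_{k+1}) > 2\pi$, a contradiction. Since the hypothesis was only that \emph{some} $(\mu^+, \mu^-)$-flow has such a branching point, the contradiction with minimality yields a flow of strictly smaller Gilbert functional, proving the theorem.

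The main obstacle I anticipate is the matching between the two cyclic orderings: Lemma~\ref{f1} produces strict inequality for the chain in the order of partial sums of the masses, while the planar angle sum equals $2\pi$ for the order in which edges physically occur around $b$. One must ensure these can be taken to be the same cyclic order — equivalently, that in a minimal flow the edges leaving $b$ are arranged so that consecutive masses correspond to consecutive partial sums; if the paper instead applies Lemma~\ref{d1} to non-adjacent edges or needs a separate combinatorial argument (e.g.\ that any cyclic order works because $h(m_i,m_j)$ satisfies a suitable subadditivity $h(m_i, m_j) + h(m_j, m_k) \geq h(m_i, m_k)$ compatible with merging), that is where the real content lies. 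A secondary point to check carefully is the degenerate cases in Lemma~\ref{f1} (some $P_k = P_{k+1}$, which here is excluded since masses $m_k$ are nonzero and $C$ is admissible hence $C(|m_k|) > 0$) and the equality-case analysis, but admissibility and Proposition~\ref{independence} were set up precisely to dispatch these.
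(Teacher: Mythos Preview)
Your approach is essentially identical to the paper's: enumerate the edges counterclockwise, form the closed Hilbert-space polygon via partial sums, identify the exterior angles with the $h(m_j,m_{j+1})$, and confront Lemma~\ref{f1} with the planar angle sum $2\pi$. Your worry about matching cyclic orderings is a non-issue and is handled exactly as you say (and as the paper does): simply label the edges in their counterclockwise order around $O$ from the start.

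There is, however, one genuine gap. You assert that Proposition~\ref{independence} makes $P_0,\dots,P_{d-1}$ affinely independent, hence not contained in a $2$-plane for $d\geq 4$, and then invoke the \emph{strict} inequality in Lemma~\ref{f1}. But Proposition~\ref{independence} only gives affine independence for \emph{distinct} exponents, and nothing prevents two partial sums $m_1+\dots+m_i$ and $m_1+\dots+m_j$ from coinciding (equivalently, a consecutive block $m_{i+1}+\dots+m_j$ summing to zero). In that case $P_i=P_j$ and your dimension count collapses. Your final paragraph only rules out $P_k=P_{k+1}$, not non-consecutive repetitions.

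The paper closes this gap by reversing the logic: it first gets $\sum\varphi_j\geq 2\pi$ from Lemma~\ref{f1} and $\sum\angle V_jOV_{j+1}\geq\sum\varphi_j$ from Lemma~\ref{d1}, forcing equality throughout; the equality case of Lemma~\ref{f1} then places all $A_j$ in a common $2$-plane; since the \emph{distinct} $A_j$'s are affinely independent by Proposition~\ref{independence}, there are at most three of them (and if three, non-collinear); finally one checks directly that a closed $k$-gon with $k\geq 4$ on at most three non-collinear points has $\sum\varphi_j>2\pi$, contradicting the forced equality. This extra case analysis is short but necessary.
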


\begin{proof} Assume the contrary. Let $O$ be a branching point, $OV_1,OV_2,\ldots,OV_k$, $k\geqslant 4$, be the edges incident to $O$, enumerated counterclockwise. Further the indices of $V_i$'s are taken modulo $k$, so that $V_1=V_{k+1}$ etc. Denote $m_i=m(OV_i)$, then by the definition of flow we get $\sum m_i=0$. By Lemma~\ref{d1}, $\angle V_iOV_{i+1}\geqslant h(m_i,m_{i+1})$. 

Consider the functions $A_j(x):=e^{i(m_1+\ldots+m_j)x}-1$ for $j=1,2,\ldots$ (here $i$ is the imaginary unit). 
Then $\sum m_j=0$ yields that  $A_{j+k}\equiv A_j$ for all $j>0$.

Since the cost function $C(t)$ is admissible, there exists a Borel measure $\lambda$ on $\mathbb{R}$ with uncountable support such that  $\int \min(x^2,1)d\lambda(x)<\infty$ and 
\[
C(t)=\sqrt{\int 4\sin^2 \frac{tx}2 d\lambda(x)}.
\]

Using the identity $|e^{ia}-e^{ib}|^2=4\sin^2\frac{a-b}2$ for real $a,b$ we note that for $j,s>0$ in the Hilbert space $L^2(\lambda)$ we have
\[
\|A_{j+s}-A_j\|^2 = C(|m_{j+1}+\ldots+m_{j+s}|)^2.
\]
In particular, the lengths of the sides of the triangle $A_{j-1}A_jA_{j+1}$ are equal to $C(|m_j|)$, $C(|m_{j+1}|)$ and $C(|m_{j}+m_{j+1}|)$. 
Therefore $\varphi_j:=\pi-\angle A_{j-1}A_jA_{j+1} = h(m_j,m_{j+1})$. 
By Lemma \ref{f1} we get $\sum \varphi_j\geqslant 2\pi$.

By Lemma \ref{main_l}, this yields $2\pi=\sum_{j=1}^k \angle V_jOV_{j+1}\geqslant \sum \varphi_j \geqslant 2\pi$. Therefore, the equality  must take place. Again by Lemma \ref{f1} it follows that the points $A_j$ belong to the same 2-dimensional subspace. 
But by Proposition \ref{independence}, distinct points between $A_j$'s are affinely independent.
Therefore, there exist at most three distinct $A_j$'s, and if exactly three, they are not collinear. 
It is easy to see that the
equality $\sum \varphi_j=2\pi$ under these conditions does not hold when $k>3$. A contradiction.
\end{proof}

\section{Examples of branching points of degree 4}

Let us start with an example in three dimensions. Consider four masses $m_1,m_2,m_3,m_4$ of zero sum, such that no two of them give zero sum.
Repeat the beginning of the proof of Theorem~\ref{theorem:main} to get the simplex $A_1A_2A_3A_4$ in 3-dimensional space.
Now consider unit edges $OB_i$ in $\mathbb{R}^3$ with directions $A_{i-1}A_{i}$, $1\leq i\leq 4$.
By the construction the angles between vectors $OB_i$ and $OB_{i+1}$ are exactly $h(m_i,m_{i+1})$. 
Suppose that angles $\angle B_1OB_3$ and $\angle B_2OB_4$ are greater than $h(m_1,m_3)$ and $h(m_2,m_4)$, respectively.

Then we claim that the flow 
\[
\sum_{i=1}^4 m_i \cdot (\delta_O - \delta_{B_i})
\]
is the unique solution of the corresponding Gilbert--Steiner problem.

First, if we fix the graph structure then the position of $O$ is optimal by the following lemma, because the closeness of the polychain
$A_1A_2A_3A_4A_1$ gives exactly~\eqref{eq:geometricmean}.

\begin{lemma}[Weighted geometric median,~\cite{weiszfeld2009point}]
Consider different non-collinear points $A,B,C,D \in \mathbb{R}^3$ and let $w_1$, $w_2$, $w_3$, $w_4$ be non-negative reals. 
Then 
\[
L(X) := w_1 \cdot  |AX| + w_2 \cdot  |BX| + w_3 \cdot  |CX| + w_4 \cdot |DX|
\]
has unique local (and global) minimum satisfying
\begin{equation}\label{eq:geometricmean}
w_1 \bar{e_A} + w_2 \bar{e_B} + w_3 \bar{e_C} + w_4 \bar{e_D} = 0,    
\end{equation}
where $\bar{e_A},\bar{e_B},\bar{e_C},\bar{e_D}$ are unit vectors codirected with $XA$, $XB$, $XC$, $XD$, respectively.
\label{main_l2}
\end{lemma}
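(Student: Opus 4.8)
The plan is to exploit convexity. I would write $P_1=A,P_2=B,P_3=C,P_4=D$ and treat the case $w_1,\dots,w_4>0$ (which is the one used: in the application $w_i=C(|m_i|)$ and the masses are nonzero). First, $L$ is a sum of the convex continuous functions $X\mapsto w_i|P_iX|$, hence convex and continuous, and $L(X)\geq \max_i w_i(|X|-|P_i|)\to\infty$ as $|X|\to\infty$, so a global minimum is attained; since a local minimum of a convex function is global, it suffices to prove uniqueness and the stationarity condition.

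For uniqueness I would show $L$ is strictly convex along every line $\ell$. In an arc-length parameter $t$ one has $|X_0+tv-P_i|=\sqrt{t^2+b_it+c_i}$ with $c_i-b_i^2/4=\dist(P_i,\ell)^2$, and $\sqrt{t^2+b_it+c_i}$ is strictly convex unless this quantity vanishes, i.e. unless $P_i\in\ell$ (in which case the term is $w_i|t-t_0|$). Since $A,B,C,D$ are not all on one line, every line misses some $P_i$, whose weight is positive, so $L|_\ell$ is strictly convex. Hence $L$ has at most one minimizer $X_{\min}$, and it is the unique local minimizer.

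Next, at any point $X\notin\{A,B,C,D\}$ the function $L$ is differentiable with
\[
\nabla L(X)=\sum_{i=1}^4 w_i\frac{X-P_i}{|X-P_i|}=-\bigl(w_1\bar{e_A}+w_2\bar{e_B}+w_3\bar{e_C}+w_4\bar{e_D}\bigr),
\]
so such an $X$ is the global minimizer if and only if it satisfies \eqref{eq:geometricmean}. It remains to rule out $X_{\min}\in\{A,B,C,D\}$: if $X_{\min}=A$, then for every unit vector $v$ the one-sided derivative $\frac{d}{dt}\big|_{0^+}L(A+tv)=w_1-\langle w_2\bar{e_B}+w_3\bar{e_C}+w_4\bar{e_D},\,v\rangle$ must be nonnegative, forcing $w_1\geq|w_2\bar{e_B}+w_3\bar{e_C}+w_4\bar{e_D}|$ (vectors based at $A$); under the hypotheses — in the application the $\bar e$'s are the edge directions of the nondegenerate simplex $A_1A_2A_3A_4$, so no single weight dominates the combination of the other three — this is impossible, and symmetrically for $B,C,D$. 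Thus $X_{\min}$ is a non-vertex stationary point and satisfies \eqref{eq:geometricmean}.

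The main obstacle is exactly this last step, excluding a vertex minimum (and noting that it genuinely requires a no-domination hypothesis, which holds in the application); the rest is routine convex analysis. For the use made of the lemma only the easy direction is needed: since $L$ is convex, any $X_0\notin\{A,B,C,D\}$ satisfying \eqref{eq:geometricmean} has $\nabla L(X_0)=0$ and is therefore a global minimizer, unique by strict convexity — and the closedness of the polychain $A_1A_2A_3A_4A_1$ exhibits such an $X_0$, namely $O$, because $\sum_{i=1}^4 w_i\bar e_{B_i}=\sum_{i=1}^4(A_i-A_{i-1})=0$.
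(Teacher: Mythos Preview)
The paper does not give its own proof of this lemma; it simply cites Weiszfeld. Your argument is the standard one and is correct: coercivity plus convexity yields existence of a global minimizer, strict convexity along every line (using non-collinearity and that at least one $w_i>0$ off that line) yields uniqueness, and the gradient computation at non-vertex points gives the first-order condition \eqref{eq:geometricmean}.

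You are also right to flag the vertex case. As literally stated, the lemma is not true without a further hypothesis: if, say, $w_1\geq |w_2\bar e_B+w_3\bar e_C+w_4\bar e_D|$ at $A$, then $X_{\min}=A$ and \eqref{eq:geometricmean} is either undefined or unattainable. Your observation that the paper only needs the sufficiency direction is exactly right and is the cleanest resolution: since $L$ is convex and strictly convex on lines, any non-vertex point $X_0$ with $\nabla L(X_0)=0$ is the unique local and global minimizer; in the application $X_0=O$ works because $\sum_i w_i\bar e_{B_i}=\sum_i (A_i-A_{i-1})=0$. So your write-up both proves what can be proved in general and isolates precisely the extra input the application supplies.
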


Since any two masses have nonzero sum, every flow is connected.
Thus every possible competitor has 2 branching points of degree 3. Consider the case in which branching points $U$ and $V$ are connected with $B_1,B_2$ and $B_3,B_4$, respectively.
By the convexity of length, the Gilbert functional $L(U,V)$ considered on the set of all possible $U$ and $V$ ($\mathbb{R}^3 \times \mathbb{R}^3$) is a convex function. Let us show that $U = V = O$ is a local minimum.
Indeed, consider $U_\varepsilon = O + \varepsilon u$ and $V_\delta = O + \delta v$ for arbitrary unit vectors $u,v$ and small positive $\varepsilon, \delta$.
Then 
\[
L(U_\varepsilon,V_\delta) - L(O,O) = 
\]
\[
(1+o(1)) \cdot \left(w(UV)\cdot \| \varepsilon u - \delta v \| - \varepsilon \langle w_1e_1,u \rangle 
 - \varepsilon \langle w_2e_2,u\rangle  - \delta \langle w_3e_3,v \rangle  - \delta \langle w_4e_4,v \rangle  \right)=
\]
\[
(1+o(1)) \cdot \left(w(UV)\cdot \|\varepsilon u - \delta v\| - \varepsilon \langle w_{12}e_{12},u \rangle - \delta \langle w_{34}e_{34},v \rangle \right),
\]
where $w_{12}e_{12} = w_1e_1 + w_2e_2$ and $w_{34}e_{34} = w_3e_3 + w_4e_4$ for unit $e_{12}$ and $e_{34}$. By the construction one has $w_{12} = w_{34} = w(UV)$ and $e_{12} + e_{34} = 0$, so
\[
L(U_\varepsilon,V_\delta) - L(O,O) = (1+o(1)) \cdot w(UV)\cdot (\|\varepsilon u - \delta v\| - \langle e_{12}, \varepsilon u - \delta v\rangle ).
\]
Since $e_{12}$ is unit, the derivative is non-negative for every $u,v$. 

The case in which $U$ and $V$ are connected with $B_2,B_3$ and $B_4,B_1$, respectively, is completely analogous. In the remaining case ($U$ is connected with $B_1,B_3$ and $V$ is connected with $B_2,B_4$) we have $w_{12} = w_{34} < w(UV)$ due to $\angle B_1OB_3 > h(m_1,m_3)$ and $\angle B_2OB_4 > h(m_2,m_4)$. Thus $U = V = O$ is also a local minimum.

It is known~\cite{gilbert1967minimum} that $L$ has a unique local and global minimum, which finishes the example.

No proceed with planar examples of $4$-branching for some non-admissible cost-function $C$.
Then we may repeat the 3-dimensional argument starting with planar $A_1A_2A_3A_4$. 

The simplest way to produce an example is to consider a trapezoid $A_1A_2A_3A_4$ and apply Ptolemy's theorem.
This case corresponds to $m_1 = m_3$ and $m_1 + m_2 + m_3 + m_4 = 0$.
Then $|A_1A_2| = |A_3A_4| = C(|m_1|)$, $|A_2A_3| = C(|m_2|)$, $|A_4A_1| = C(|m_4|)$ and $|A_1A_3| = |A_2A_4| = C(|m_1 + m_2|)$.
The existence of such trapezoid means
\begin{equation} \label{ptolemy}
C(|m_1 + m_2|)^2 = C(|m_1|) \cdot \big ( C(|m_2|) + C(|2m_1 + m_2|) \big).    
\end{equation}
If we assume that $C$ is monotone and subadditive then~\eqref{ptolemy} means that a trapezoid exists; note that we need values of $C$ only at 4 points.

Now we give an example of a monotone, subadditive and concave cost function with 4-branching. 
For this purpose put $m_1 = m_2 = m_3 = 1$ and $m_4 = -3$, $C(1) = 1$, $C(2) = 1.9$, $C(3) = 2.61$; clearly~\eqref{ptolemy} holds.
Now one can easily interpolate a desired $C$, for instance
\[
C(t) = \begin{cases}
    t , & t \leq 1 \\
    0.1 + 0.9 t, & 1 < t \leq 2\\
    0.48 + 0.71t, & 2 < t \leq 3\\
    1.11 + 0.5t & 3 < t.
\end{cases}
\]
Finally, the inequalities $\angle B_1OB_3 > \angle B_2OB_3 = h(m_2,m_3) = h(m_1,m_3)$ and $\pi = \angle B_2OB_4 > h(m_2,m_4)$ hold.

\section{Open questions}

It would be interesting to describe all cost functions for which the conclusion of Theorem~\ref{theorem:general} holds.

Now let us focus on the cost function $C(x) = x^p$. Having a knowledge that every branching point has degree 3 one can adapt Melzak algorithm~\cite{melzak1961problem} from Steiner trees to Gilbert--Steiner problem. 
The idea of the algorithm is that after fixing the combinatorial structure one can find two terminals $t_1$, $t_2$ connected with the same branching point $b$. Then one may reconstruct the solution for $V$ from the solution for $V \setminus \{t_1,t_2\} \cup \{t'\}$  for a proper $t'$ which depend only on $t_1,t_2$ (in fact one has to check 2 such $t'$). When the underlying graph is a matching we finish in an obvious way. 
Application of this procedure for all possible combinatorial structures gives a slow but mathematically exhaustive algorithm in the planar case.

However there is no known algorithm in $\mathbb{R}^d$ for $d > 2$ (see Problem 15.12 in~\cite{bernot2008optimal}). Recall that we have to consider a high-degree branching.

A naturally related problem is to evaluate the maximal possible degree of a branching point in the $d$-dimensional Euclidean space for every $d$. Note that the dependence on the cost function may be very complicated.

Some other questions are collected in Section~15 of~\cite{bernot2008optimal} (some of them are solved, in particular Problem 15.1 is solved in~\cite{colombo2021well}).

\paragraph{Acknowledgements.} The research is supported by RSF grant 22-11-00131. We thank Guy David and Yana Teplitskaya for sharing the problem. We are grateful to Vladimir Zolotov for reading a previous version of the text and 
pointing out our attention at~\cite{schoenberg}.

\bibliography{main}
\bibliographystyle{plain}

\end{document}